\definecolor{labelkey}{gray}{.20}
\definecolor{refkey}{gray}{.20}
\definecolor{eqkey}{gray}{.20}
\def\mathclap#1{\text{\hbox to 0pt{\hss$\mathsurround=0pt#1$\hss}}}
\newcommand{\beq}{\begin{equation}}
\newcommand{\eeq}{\end{equation}}
\newtheorem{theorem}{Theorem}
\newtheorem*{theorem*}{Theorem}
\newtheorem{lemma}[theorem]{Lemma}
\newtheorem*{lemma*}{Lemma}
\newtheorem{proposition}[theorem]{Proposition}
\newtheorem{problem}[theorem]{Problem}
\newtheorem{conjecture}[theorem]{Conjecture}
\newtheorem{cory}[theorem]{Corollary}
\theoremstyle{definition}
\newtheorem{remark}[theorem]{Remark}
\newtheorem{remarks}[theorem]{Remarks}
\newcommand{\al}{{\alpha}}
\newcommand{\be}{{\beta}}
\newcommand{\eps}{{\varepsilon}}
\newcommand{\ga}{{\gamma}}
\newcommand{\Ga}{{\Gamma}}
\newcommand{\ka}{{\kappa}}
\newcommand{\la}{{\lambda}}
\renewcommand{\phi}{{\varphi}}
\newcommand\aut{\operatorname{Aut}}
\newcommand\R{\mathbb R}
\newcommand\Q{\mathbb Q}
\newcommand\Z{\mathbb Z}
\newcommand\C{\mathbb C}
\newcommand\N{\mathbb N}
\newcommand\ZZ{\mathbf Z}
\newcommand{\into}{\hookrightarrow}
\newcommand{\actson}{\curvearrowright}
\newcommand{\wh}[1]{{\widehat {#1}}}
\newcommand{\cal}[1]{{\mathcal #1}}
\newcommand*\if@single[3]{%
  \setbox0\hbox{${\mathaccent"0362{#1}}^H$}%
  \setbox2\hbox{${\mathaccent"0362{\kern0pt#1}}^H$}%
  \ifdim\ht0=\ht2 #3\else #2\fi
  }
\newcommand*\rel@kern[1]{\kern#1\dimexpr\macc@kerna}
\newcommand*\widebar[1]{\@ifnextchar^{{\wide@bar{#1}{0}}}{\wide@bar{#1}{1}}}
\newcommand*\wide@bar[2]{\if@single{#1}{\wide@bar@{#1}{#2}{1}}{\wide@bar@{#1}{#2}{2}}}
\newcommand*\wide@bar@[3]{%
  \begingroup
  \def\mathaccent##1##2{%
    \if#32 \let\macc@nucleus\first@char \fi
    \setbox\z@\hbox{$\macc@style{\macc@nucleus}_{}$}%
    \setbox\tw@\hbox{$\macc@style{\macc@nucleus}{}_{}$}%
    \dimen@\wd\tw@
    \advance\dimen@-\wd\z@
    \divide\dimen@ 3
    \@tempdima\wd\tw@
    \advance\@tempdima-\scriptspace
    \divide\@tempdima 10
    \advance\dimen@-\@tempdima
    \ifdim\dimen@>\z@ \dimen@0pt\fi
    \rel@kern{0.6}\kern-\dimen@
    \if#31
      \overline{\rel@kern{-0.6}\kern\dimen@\macc@nucleus\rel@kern{0.4}\kern\dimen@}%
      \advance\dimen@0.4\dimexpr\macc@kerna
      \let\final@kern#2%
      \ifdim\dimen@<\z@ \let\final@kern1\fi
      \if\final@kern1 \kern-\dimen@\fi
    \else
      \overline{\rel@kern{-0.6}\kern\dimen@#1}%
    \fi
  }%
  \macc@depth\@ne
  \let\math@bgroup\@empty \let\math@egroup\macc@set@skewchar
  \mathsurround\z@ \frozen@everymath{\mathgroup\macc@group\relax}%
  \macc@set@skewchar\relax
  \let\mathaccentV\macc@nested@a
  \if#31
    \macc@nested@a\relax111{#1}%
  \else
    \def\gobble@till@marker##1\endmarker{}%
    \futurelet\first@char\gobble@till@marker#1\endmarker
    \ifcat\noexpand\first@char A\else
      \def\first@char{}%
    \fi
    \macc@nested@a\relax111{\first@char}%
  \fi
  \endgroup
}
\newcommand*\mcapinn[2]{\vcenter{\hbox{$\mathsurround=0pt
  \ifx\displaystyle#1\textstyle\else#1\fi\bigcap$}}}
\newcommand*\mcupinn[2]{\vcenter{\hbox{$\mathsurround=0pt
  \ifx\displaystyle#1\textstyle\else#1\fi\bigcup$}}}
\newcommand\scdot{{\cdot}}
\newcommand\diag{\operatorname{Diag}}
\newcommand\dimvn{\dim_\text{vN}}
\newcommand{\cc}{\cal}
\newcommand{\Zmod}[1]{\ZZ_{#1}}
\newcommand{\0}{{\texttt{\large 0}}}
\newcommand{\uz}{{\underline{\texttt{\large 0}}}}
\newcommand{\1}{{\texttt{\large 1}}}
\newcommand{\uo}{{\underline{\texttt{\large 1}}}}
\renewcommand{\ge}{\geqslant}
\renewcommand{\ge}{\geqslant}
\renewcommand{\le}{\leqslant}
\begin{document}

\title[Irrational $l^2$-invariants]{Irrational $l^2$-invariants arising from the lamplighter group}
\author{{\L}ukasz Grabowski}
\let\thefootnote\relax\footnote{\hspace{-18pt}\textit{Email:} \texttt{graboluk@gmail.com}}
\let\thefootnote\relax\footnote{\hspace{-18pt}2010 \textit{Mathematics Subject Classification:} 20C07, 20F65, 57M10.}
\let\thefootnote\relax\footnote{\hspace{-18pt}\textit{Key words and phrases:} $l^2$-invariants, Atiyah conjecture, Novikov-Shubin invariants, $l^2$-Betti numbers.}
\maketitle
\vspace{-30pt}
\begin{center}
\textit{Mathematics Institute, University of Warwick, Coventry, CV4 7AL, UK}\end{center}
\begin{abstract}
We show that the Novikov-Shubin invariant of an element of the integral group ring of the lamplighter group $\ZZ_2\wr\ZZ$ can be irrational. This disproves a conjecture of Lott and L\"uck. Furthermore we show that every positive real number is equal to the Novikov-Shubin invariant of some element of the real group ring of $\ZZ_2\wr \ZZ$. Finally we show that the $l^2$-Betti number of a matrix over the integral group ring of the group $\ZZ_p\wr \ZZ$, where $p$ is a natural number greater than $1$, can be irrational. As such the groups $\ZZ_p\wr\ZZ$ become the simplest known examples which give rise to irrational $l^2$-Betti numbers.


\end{abstract}

\maketitle


\vspace{5pt}
Let $\Ga$ be a countable discrete group. A real number $r$ is said to be an
\textit{$l^2$-Betti number arising from $\Ga$} if there is a matrix $T$ with entries in the integral group ring $\Z [\Ga]$, such that the \textit{von Neumann dimension} of the kernel of $T$ is equal to $r$. 

The motivation for the name is as follows: when  $r$ is an $l^2$-Betti number arising from $\Ga$, then there exists a normal covering $M$ of a finite CW-complex whose deck transformation group is $\Ga$, and such that one of the $l^2$-Betti numbers of $M$ is equal to $r$.  We refer to the very readable introduction \cite{Eckmann_intro} for more details.

The following problem is a fine-grained version of a question asked by Atiyah in \cite{Atiyah1976}.

\begin{problem}[The Atiyah problem for $\Ga$] What is the set of $l^2$-Betti numbers arising from $\Ga$?
\end{problem}
Let us denote this set by $\cal C(G)$. For a class of groups
$C$ define $\cc C(C) = \cup_{\Ga\in C} \cc C(\Ga)$. 

So far $\cc C(\Ga)$ has been computed only in cases where $\cc C(\Ga)$ turns out to be a subset of $\Q$. In fact, the statement  known as the \textit{Atiyah conjecture for torsion-free groups} says that $\cc C(\Ga)=\N$ for any
torsion-free group, and before \cite{Dicks_Schick} it was widely conjectured that $\cc C(\Ga) \subset \Q$ for every group $\Ga$. However, \cite{Dicks_Schick} gives an example of a group ring element  $T$ together with an heuristic argument showing why the von Neumann dimension of $\ker T$ is probably irrational. That example is based on \cite{Grigorchuk_Zuk2001}, where a weaker form of the Atiyah conjecture was disproved.

Only recently Austin \cite{arxiv:austin-2009} obtained  a definite result by proving 
that $\cc C($Finitely generated groups$)$ is uncountable.  His results were extended and simplified in  \cite{grabowski-on-turing-dynamical-systems-and-the-atiyah-problem}  and \cite{arxiv:pichot_schick_zuk-2010}, and additional examples were found in  \cite{arxiv:lehner_wagner-2010}.

All the groups $G$ for which it was shown $\cal C(G)\not\subset \Q$ have one of the lamplighter groups $\ZZ_p\wr \ZZ$, where $p$ is a natural number greater than $1$, as a subgroup, but are substantially more complicated than that.


Our first result is as follows.

\begin{theorem}\label{thm-intro-zero} There is a matrix $T$ with entries in the group ring $\Z[\ZZ_p\wr\ZZ]$ such that 
$$
\dimvn\ker T = 1344\left( \frac{4p^3+3p^2+2p-1}{8p^3} + \frac1{8p^3} \sum_{k=1}^\infty \left(\frac{p-1}{p}\right)^{k+2^{k}}\right),
$$
which is a transcendental number.
\end{theorem}

In the view of the preceding discussion, the following problem captures the limit of the currently available methods for finding groups $\Ga$ such that $\cal C(\Ga) \not\subset \Q$.

\begin{problem} Does $\cc C(\Ga) \not\subset \Q$ imply $\Zmod{p}\wr \ZZ\subset \Ga$ for some $p$?
\end{problem}

As mentioned above, $\cc C(\Ga)$ has been computed only in the cases where in fact $\cc C(\Ga)\subset \Q$. Since $\ZZ_p\wr\ZZ$ are  the simplest groups for which we know $\cal C(\Ga)\not\subset\Q$, it is natural to ask the following.

\begin{problem} Is there a description of $\cc C(\Zmod{2}\wr \ZZ)$ substantially different from the definition?
\end{problem}

To state a more concrete problem:  is $\sqrt 2 \in \cc C(\Zmod{2}\wr \ZZ)$ ?

\vspace{5pt}
For our second result let us recall the definition of another spectral invariant associated to an element of a group ring, the \textit{Novikov-Shubin invariant}. It measures the growth of the number of eigenvalues around $0$. More precisely, given a self-adjoint $T\in \C[\Ga]$, the Novikov-Shubin invariant of $T$ is defined as 
\beq\label{eq-ns-def}
\al(T) := \liminf_{\la\to 0^+}\frac{\log(\mu_T((0,\la]))}{\log(\la)},
\eeq
where $\mu_T$ is the spectral measure of $T$ (see \cite[Chapter 2]{Lueck:Big_book} for more details).

\begin{remarks} (i) It is irrelevant whether we take $\mu_T((0,\la])$ or $\mu_T((0,\la))$ in \eqref{eq-ns-def}. However, it is important that we do not include $0$, since otherwise $\al(T)$ would be equal to $0$ whenever the spectral measure of $T$ has an atom at $0$. It is also irrelevant what is the base of the logarithm. It is convenient for us to take the base-$2$ logarithm.

(ii) Both the numerator and the denominator are negative when $\la$ is sufficiently small, so $\al(T) \in [0,\infty]$.

(iii) If for some $d$ and all $\eps$ there is a constant $C>0$ such that for sufficiently small $\la$ we have $\frac1C\la^{d+\eps} < \mu_T((0,\la)) < C\la^{d-\eps}$ then a short computation shows that $\al(T) = d$.
\end{remarks}

Lott and L\"uck \cite{lott-lueck-l2-topological-invariants-of-3-manifolds} proposed the following conjecture.

\begin{conjecture}\label{conj-lott-lueck-intro} When $T\in \Z[\Ga]$ then $\al(T) >0$ and $\al(T)\in \Q$.
\end{conjecture}

For partial results and the motivations for Conjecture \ref{conj-lott-lueck-intro} see \cite[Section 2.5]{Lueck:Big_book}. For counterexamples to the positivity part see \cite{2014arXiv1409.3212G}. In the present paper we construct $T\in \Z[\ZZ_2\wr \ZZ]$ such that $\al(T)\notin \Q$. In fact we show the following.

\begin{theorem}\label{thm-intro-ns}
There is a family $T(b)\in \R[\ZZ_2\wr\ZZ]$, $b\in (1,\infty)$ such that for $b\in \Q$ we have $T(b)\in \Q[\ZZ_2\wr\ZZ]$ and $\al(T(b)) = \frac1{2\log_2(b)}$.
\end{theorem}

Note that the Novikov-Shubin invariant of $T$ and $kT$ is the same for $k>0$, and so we also obtain examples of $T\in \Z[\ZZ_2\wr\ZZ]$ with  irrational Novikov-Shubin invariants.

To the author's best knowledge, the counterexamples to the rationality part of Conjecture \ref{conj-lott-lueck-intro} were not known before even if $\Z[\Ga]$ is replaced by $\R[\Ga]$. The family $T(b)$ is a modification of the operator studied by Grigorchuk and \.Zuk \cite{Grigorchuk_Zuk2001}.

As in the case of $l^2$-Betti numbers, when $r$ is a Novikov-Shubin invariant of some $T\in \Z[G]$,  then there exists a normal covering $M$ of a finite CW-complex whose deck transformation group is $\Ga$, and such that one of the Novikov-Shubin invariants of $M$ is equal to $r$. Conjecture \ref{conj-lott-lueck-intro} could still be true in the case of a finite \textit{aspherical} CW-complex.

Thereom \ref{thm-intro-ns} has an interesting consequence that the set of the Novikov-Shubin invariants of all the elements of $\Q[\ZZ_2\wr\ZZ]$, which  is countable, is  different than the set of the Novikov-Shubin invariants of all the elements of $\R[\ZZ_2\wr\ZZ]$. The analogous question has been asked among the experts for $l^2$-Betti numbers, since there are classes of torsion-free groups for which the Atiyah conjecture is known for $\Q[\Ga]$ but not for $\R[\Ga]$.

\begin{problem}\label{problem-9}
Is it the case that for every $T\in \R[\Ga]$ there exists $T'\in \Q[\Ga]$ such that $\dimvn\ker T = \dimvn\ker T'$?
\end{problem}

Although Theorem \ref{thm-intro-ns} shows that the answer is negative when we replace $\dimvn\ker T= \dimvn\ker T'$ with $\al(T) = \al(T')$, the author believes that at least for $\Ga= \ZZ_2 \wr \ZZ$ the answer to Problem \ref{problem-9} is positive.


\vspace{5pt} The structure of the article is as follows. In the next section we describe the computational tool, in a generality which is just enough for the proof of Theorem \ref{thm-intro-ns}. A general version is presented in \cite[Section 2]{grabowski-on-turing-dynamical-systems-and-the-atiyah-problem} and we refer there for the proofs. Various variants of it were also used for example in \cite{MR1436310}, \cite{Dicks_Schick}, \cite{Lehner_Neuhauser_Woess:On_the_spectrum_of},  \cite{arxiv:austin-2009}, and \cite{arxiv:pichot_schick_zuk-2010}.

In Section \ref{sec-ns} we prove Theorem \ref{thm-intro-ns}. Section \ref{sec-tool-non-basic} presents a slightly different version of the computational tool, which is then used in Section \ref{sec-the-operator} to prove Theorem \ref{thm-intro-zero}. 

Some elementary linear algebra computations are deferred to the appendix.

\vspace{5pt}\textbf{Notation.} The rings of integer, rational, real and complex numbers are $\Z$, $\Q$, $\R$ and $\C$. The cyclic group of order $p$ is $\ZZ_p$ and the infinite cyclic group is $\ZZ$. We fix a generator of $\ZZ$ and denote it by $t$. Given an action $\Ga\actson X$, the result of the action of $\ga\in \Ga$ on $x\in X$ is denoted by $\ga.x$. For example the  translation action of $\ZZ\actson \Z$ is, by definition, given by $t.k:=k{+}1$.

Given two groups $A$ and $B$ the wreath product $A\wr B$ is defined to be $B\ltimes \oplus_B A$, where the action $B\actson \oplus_B A$ is by shifting the coordinates from the left. However, in the case $B=\ZZ$, we write $\ZZ\ltimes \oplus_\Z A$ because it is easier to refer to the coordinates of an element of $\oplus_\Z A$  (which are simply integer numbers) than to the coordinates of an element of $\oplus_\ZZ A$ (which are powers of $t$).

The neutral element of a group is denoted by $e$.

\vspace{5pt}\textbf{Note on chronology.} The first version of this article submitted to arXiv in 2010 contained only Theorem \ref{thm-intro-zero}. Theorem \ref{thm-intro-ns} was added in 2014.

\vspace{5pt}\textbf{Thanks.} The author thanks an anonymous referee for useful comments and especially for pointing out a gap in the proof of Lemma \ref{lem-matrix-lemma}. The author thanks also {\'S}wiatos{\l}aw Gal,  Holger Kammeyer, Jarek K\k{e}dra, Manuel Koehler, Thomas Schick and Andreas Thom for useful comments. 

The author was supported by EPSRC at Imperial College London and Oxford University, by EPSRC grant EP/K012045/1 at University of Warwick, by Austrian Science Foundation project P25510-N26 during author's stay at T.U. Graz, and by Fondations Sciences Math{\'e}matiques de Paris during the program \textit{Marches Al\'eatoires et G\'eom\'etrie Asymptotique des Groupes} at Institut Henri-Poincar\'e.

\renewcommand{\0}{\,{\texttt{\large 0}}}
\renewcommand{\1}{\,{\texttt{\large 1}}}
\newcommand{\2}{\,{\texttt{\large 2}}}
\newcommand{\D}{\,{\texttt{\large D}}}
\renewcommand{\-}{\,{\texttt{\small $\bullet$}}}
\newcommand{\ppp}{\,{\texttt{\small $\bullet$}}}
\renewcommand{\u}{\underline}

\section{Computational tool in the case of a free action}\label{sec-tool-basic}

Assume that $(X,\mu)$ is a compact abelian group with the normalized Haar measure which is Pontryagin-dual to a countable discrete abelian group $A$. Assume furthermore that the action $\Ga\actson X$ is by continuous group automorphisms. 
The Pontryagin duality gives us  an embedding $\C[A]\into L^\infty(X)$. The preimage of $f\in L^\infty(X)$ under this embedding, if it exists, is denoted by $\wh f$ (see \cite[Chapter 4]{Folland:A_course_in_abstract_harmonic_analysis} for more on the Pontryagin duality).  

Let $\chi_1,\ldots, \chi_n$ be the indicator functions of subsets $X_1,\ldots, X_n\subset X$ such that all  $\chi_i$ have preimages. Let $a_1,\ldots, a_n\in \C$ and $\ga_1,\ldots, \ga_n \in \Ga$.

Let $\wh T\in \C[\Ga\ltimes A]$ be defined as $\wh T:= \sum a_i\ga_i\wh\chi_i$, and let $T\in\Ga\ltimes  L^\infty(X)$ be defined as $T := \sum a_i\ga_i\chi_i$. 

We consider $\C[\Ga\ltimes A]$ as acting on $l^2(\Ga\ltimes A)$ by bounded operators. The spectral measures of the elements of $\C[\Ga\ltimes A]$ are computed with respect to this action and the vector in $l^2(\Ga\ltimes A)$ which is the indicator function of the neutral element.

Similarly the \textit{group-measure space von Neumann algebra} $\Ga\ltimes L^\infty (X)$  (see e.g.~\cite[Chapters 1 and 2]{Lueck:Big_book}) acts on the direct integral Hilbert space $\int_X^\oplus l^2(\Ga)\,d\mu(x)$, and the spectral measure is computed with respect to the vector equal to the function which sends all $x\in X$  to the indicator function of the neutral element.

As explained for example in \cite[Section 2]{grabowski-on-turing-dynamical-systems-and-the-atiyah-problem}, we have the following lemma.

\begin{lemma}\label{lem-pontryagin}
The spectral measures of $\wh T$ and of $T$ are the same. \qed
\end{lemma}

We will now explain how to compute the spectral measure of $T$ \textit{under the assumption that the action $\Ga\actson X$ is essentially free}, i.e.~there is a subset $X'\subset X$ of full measure which is $\Ga$-invariant and such that the action of $\Ga$ on $X'$ is free.

Consider the oriented edge-labelled graph $\cal G$ defined as follows. The set of vertices of $\cal G$ is $X$, and there is an edge from $x_1$ to $x_2$ if for some $i$ we have $x_1\in X_i$ and $\ga_i.x_1=x_2$. On such an edge we set the label to be equal to 
\def\mathclap#1{\text{\hbox to 0pt{\hss$\mathsurround=0pt#1$\hss}}}
$$
	\sum_{\mathclap{\substack{j\colon \ga_j=\ga_i\\x_1\in X_j}}} a_j.
$$

 Let $\cal G(x)$ be the connected component of $x$ in $\cal G$. Let $l^2(\cal G(x))$ be the Hilbert space spanned by the vertices of $\cal G(x)$. Let $T(x) \colon l^2(\cal G(x)) \to l^2(\cal G(x))$ be the adjacency operator on $\cal G(x)$, i.e.~the entry of the matrix of $T(x)$ corresponding to a pair of vertices $(v_1, v_2)$ is equal to the label on the edge from $v_1$ to $v_2$, if there is such an edge, and $0$ otherwise.

We say $T$ is \textit{self-adjoint} if the set of those $x$ for which the matrix of $T(x)$ is Hermitian is of measure $1$. The next proposition follows from \cite[Proposition 2.10]{grabowski-on-turing-dynamical-systems-and-the-atiyah-problem}.

\begin{proposition}\label{prop-tool-free}
Let us assume that $T$ is self-adjoint and that the set of those $x$ for which $\cal G(x)$ is finite is of measure $1$. Then for a measurable subset $D\subset \R$ we have 
$$
\mu_T(D) = \int_X \frac{\mu_{T(x)}(D)}{|\cal G(x)|}\,d\mu(x).
$$\qed
\end{proposition}

We will apply this proposition in the next  section. Its utility comes from the fact that among the labelled graphs $\cal G(x)$, $x\in X$, there are  only countably many different ones, and they can be computed explicitly. As such the above integral will decompose as an explicit countable sum of spectral measures of \textit{finite-dimensional} matrices.

\section{Possible values of the Novikov-Shubin invariants}\label{sec-ns}

We need a more quantitative version of \cite[Lemma 5]{2014arXiv1409.3212G}. For $b\in \R$ and $n\in \N$ let $M(b,n)$ be the $n\times n$ matrix
$$
\left( \begin{array}{cccccc}
1 & b &  &  &   \\
b & b^2+1 & b &   &   \\
 & b & b^2+1  &  &   \\
& & & \ldots & & \\
 & & &  & b^2+1 &b \\ 
  & & &  &b & b^2+1
\end{array} \right)
$$

\begin{lemma}\label{lem-matrix-lemma}
For every $\eps$ and $b>1$ there is $N$ such that for $n>N$ the matrix $M(b,n)$ has an eigenvalue $\la_1(b,n)$ such that 
$$
(\frac{1}{b^2}-\eps)^n <\la_1(b,n) <(\frac{1}{b^2}+\eps)^n,
$$
and such that all the other eigenvalues are bigger than or equal to $(b-1)^2$. 
\end{lemma}
\begin{proof}
Let us fix $\eps$ and $b$. Let $K(b,n) = M(b,n) + \diag(b^2,0,0,\ldots,0)$, i.e.~we replace the anomalous $1$ on the diagonal with $b^2+1$. Let $\ka_1\le \ka_2\le\ldots\le \ka_n$ be the eigenvalues of $K(b,n)$ and let $\la_1\le \la_2\le \ldots \la_n$ be the eigenvalues of $M(b,n)$. Note that the norm of the matrix $K(b,n) - (b^2+1)\textrm I_m$ is $2b$, so we have the following claim.

\vspace{5pt}
\textbf{Claim A.} All the eigenvalues of $K(b,n)$ lie between $b^2+1-2b=(b-1)^2$ and $b^2+1+2b=(b+1)^2$.\qed
\vspace{5pt}

Let $D_n = \det(K(b,n))$ and $E_n = \det(M(b,n))$. By expanding both determinants along the final row we see that $D_n$ and $E_n$ fulfil the recurrence relations
$$
	D_{n+2} = (b^2+1)D_{n+1} - b^2 D_n\quad  	E_{n+2} = (b^2+1)E_{n+1} - b^2 E_n.
$$
Solving the recurrence in the standard way gives us $E_n=1$ for all $n$ and 
\beq\label{eq-det}
	D_n = \frac{b^2}{b^2-1} b^{2n} - \frac{1}{b^2-1}.
\eeq

Note that for any constant $C> 0$ we have that for sufficiently large $n$ the following holds:
\beq\label{eq-det-better}
(b^2-\eps)^n \le C D_n \le (b^2+\eps)^n
\eeq

Note that the difference $M(b,n)-K(b,n)$ is a rank $1$ matrix, so we can use the Weyl inequality for rank $1$ perturbations (e.g.~\cite[Theorem 4.3.4]{MR1084815}), which in particular implies that for $i= 2,\ldots,n$ we have $\la_i\ge \ka_{i-1}$. Since $\ka_1\cdot\ldots\cdot\ka_n =D_n$, it follows that $\la_2\cdot\ldots\cdot \la_n\ge \frac{D_n}{\ka_n}$.

Similarly the Weyl inequality implies that for $i=2,\ldots, n-1$ we have $\la_i\le \ka_{i+1}$, so that $\la_2\cdot\ldots\cdot\la_{n}\le \frac{\la_nD_n}{\ka_1\ka_2}$. 

Note that the norm of $ M(b,n) =K(b,n)- \diag(b^2,0,0,\ldots,0)$ is at most $(b+1)^2 +b^2$, so in particular $\la_n\le (b+1)^2+b^2$. This, together with Claim A, shows 
$$
\frac{D_n}{(b+1)^2} \le \la_2\cdot\ldots\cdot\la_n  \le \frac{((b+1)^2+b^2)D_n}{(b-1)^4}.
$$

Now \eqref{eq-det-better} implies that for sufficiently large $n$ we have
$$
(b^2-\eps)^n \le (\la_2\cdot\ldots\cdot \la_n) \le (b^2+\eps)^n.
$$

Finally since $\la_1\cdot\ldots\cdot \la_n = E_n=1$ we obtain
$$
    \frac{1}{(b^2+\eps)^n} \le \la_1 \le \frac{1}{(b^2-\eps)^n},
$$
which implies the statement about $\la_1$.

As for all the other eigenvalues, by Claim A we have $\ka_1\ge (b-1)^2$, and for $i\ge 2$ we have $\la_i\ge \la_2\ge \ka_1$ by the Weyl inequality, which finishes the proof.
\end{proof}

We introduce the following notation for the subsets of $\ZZ_2^\Z$. The elements of $\ZZ_2$ are denoted by $\0$ and $\1$. For 
 $\eps_i\in \{\0,\1\}$ we denote the set 
$$
	\{(m_i)\in \ZZ_2^\Z\colon m_{-a} = \eps_{-a}, \ldots, m_b = \eps_b\}\subset \ZZ_2^\Z,
$$
by
$$
  [\eps_{-a}\eps_{-a+1}\ldots\eps_{-1}\underline{\eps_{0}}\eps_1\ldots\eps_b],
$$
and we let
$$
  \chi[\eps_{-a}\eps_{-a+1}\ldots\eps_{-1}\underline{\eps_{0}}\eps_1\ldots\eps_b, x] \in L^\infty(\ZZ_2^\Z)
$$ 
be the corresponding indicator function. Elements from the set above will be denoted with the curly brackets $()$ instead of $[]$.

Recall that $t$ is the generator of the infinite cyclic group $\ZZ$. For $b\in \R$ let $T(b)\in \ZZ \ltimes L^\infty(\ZZ_2^\Z)$ be defined as 
$$
T(b) := -b^2\chi[\1\u\0] + b(t[\u\0]+t^{-1}\chi[\0\u\ast]) +(b^2+1).
$$
In this notation, the operator studied in \cite{Grigorchuk_Zuk2001} was $t[\u\0]+t^{-1}\chi[\0\u\ast]$. Note that the indicator functions in the definition of $T(b)$ are in the image of the Pontryagin duality map $\Q[\oplus_\Z \ZZ_2]\into L^\infty(\ZZ_2^\Z)$. So, by Lemma \ref{lem-pontryagin}, the Novikov-Shubin invariant of $T(b)$ is the same as the Novikov-Shubin invariant of the corresponding $\wh{T(b)}\in \R[\ZZ \ltimes \oplus_\Z \ZZ_2 ]= \R[\ZZ_2\wr \ZZ]$.

\begin{theorem}
For $b>1$ the Novikov-Shubin invariant of $T(b)$ is equal to  $\frac1{2\log_{2} (b)}$.
\end{theorem}
\begin{proof}
We use Proposition \ref{prop-tool-free} with $X=  \ZZ_2^\Z$, $\Ga=\ZZ$, $A=\oplus_\Z \ZZ_2$. Let us compute two examples of a graph $\cal G(x)$.

First let $x= (\1\u \1)$. Then $x\notin [\1\u\0]$, $x\notin  [\u\0]$ and $x\notin [\0\u\ast]$, so the only outgoing arrow at $x$ is the self-loop  with label $(b^2+1)$. 

As for the incoming arrows at $x$, other than the self-loop, we see that $x\notin t.[\u\0]$ and $x\notin t^{-1}.[\0\u\ast]$, so there are no incoming arrows. Accordingly $\cal G(x)$ consists only of the vertex $x$ with a self-loop with label $b^2+1$.

Now let $x= (\1\u\0\0\1)$. Since $x\in  [\u\0]$ there is an outgoing arrow from $x$ to $t.x = (\1\0\u \0\1)$ with label $b$. Since $t.x\in [\u\0]$, there is an outgoing arrow from $t.x$ to $t^2.x = (\1\0\0\u\1)$ with label $b$. Since $t.x\in [\0\u\ast]$, there is also an outgoing arrow from $t.x$ to $x$ with label $b$. Similarly $t^2.x \in [\0\u\ast]$ so there is an arrow from $t^2.x$ to $t.x$ with label $b$.

As for the self-loops , $x\in [\1\u\0]$, so there is a self-loop at $x$ with label $(b^2+1)-b^2 = 1$. The vertices $t.x$ and $t^2.x$ have self-loops with labels $b^2+1$.

In analogy with these two examples we see that when $x\in [\1\u\0\0^k\1]$ then $\cal G(x)$ is the graph on Figure \ref{fig-app} with $k+2$ vertices.

\begin{figure}[here]
\centering
\begin{tikzpicture}
  [scale=1.1,auto=center]
  \tikzset{edge/.style = {->,> = latex'}}
  \node (n1) at (1,1) {$\bullet$};
  \node (n2) at (3,1)  {$\bullet$};
  \node (n3) at (5,1)  {$\ldots$};
  \node (n4) at (7,1) {$\bullet$};

  \foreach \from/\to in {n1/n2,n2/n3,n3/n4}
    \draw[edge, bend right=23] (\from) to node[below] {$b$} (\to);
  \foreach \from/\to in {n1/n2,n2/n3,n3/n4}
    \draw[edge, bend right=23] (\to) to node[above] {$b$} (\from);
  \foreach \from/\to in {n2/n2,n4/n4}
    \draw[loop above,thick,edge] (\from) to node {$b^2+1$} (\to);
    \draw[loop above,thick,edge] (n1) to node {$1$} (n2);
\end{tikzpicture}
\caption{}\label{fig-app}
\end{figure}

 Let us check that, up to a set of measure $0$, every point of $X$ is in a connected component of $
 \cal G(x)$ for some $x\in [\1\u\0\0^k\1]$: 
$$
\mu ([\1\u \1]) + \sum_{k=0}^\infty (k+2)\mu([\1\u\0\0^k\1]) = \frac{1}{4} + \sum_{k=0}^\infty (k+2) \frac{1}{2^{k+3}} = \frac{1}{2}\sum_{k=1}^\infty \frac{k}{2^k} = 1.
$$
In particular the subset of those $x$ for which $\cal G(x)$ is finite is of full measure. Clearly the adjacency operator on the graph with $m$ vertices on Figure \ref{fig-app} is given by the matrix $M(b,m)$. Proposition \ref{prop-tool-free} now shows that 
$$
	\mu_T = \frac{1}{4} \mu_{\diag(b^2+1)} + \sum_{m=2}^\infty \frac{1}{2^{m+1}} \mu_{M(b,m)}.
$$

Let us use Lemma \ref{lem-matrix-lemma} to estimate $\mu_T((0,z])$ for small $z>0$. Let us fix a small $\eps$ in Lemma \ref{lem-matrix-lemma}. Then for sufficiently small $z$ we have
\beq\label{eq-sum}
\mu_T((0,z]) = \sum_{m\colon \la_1(b,m)\le z} \frac{1}{2^{m+1}}.
\eeq
By Lemma \ref{lem-matrix-lemma}, the smallest $m$ such that $\la_1(b,m)\le z$  is between
$$
\frac{|\log(z)|}{|\log(\frac{1}{b^2}+\eps)|}
$$
and 
$$
\frac{|\log(z)|}{|\log(\frac{1}{b^2}-\eps)|}.
$$

We estimate $\mu_T((0,z])$ from (i) below and (ii) above by taking in the sum \eqref{eq-sum} respectively (i) only the smallest $m$ such that $\la_1(b,m)\le z$, and (ii) the smallest such $m$ and all the natural numbers larger than $m$. We obtain that  $\mu_T((0,z])$ lies between
$$
2^{\frac{\log(z)}{|\log(\frac{1}{b^2}-\eps)|}} = z^\frac1{|\log(\frac{1}{b^2}-\eps)|}
$$
and
$$
2\cdot 2^{\frac{\log(z)}{|\log(\frac{1}{b^2}+\eps)|}} =2z^\frac1{|\log(\frac{1}{b^2}+\eps)|} 
$$
(in the algebraic manipulations we used that $\log(z)$ is negative for small $z$).

This shows that the Novikov-Shubin invariant of $T(b)$ lies between $\frac{1}{|\log(\frac{1}{b^2}-\eps)|}$ and  $\frac{1}{|\log(\frac{1}{b^2}+\eps)|}$, for every $\eps$, and so in fact must be equal to $\frac1{|\log(\frac{1}{b^2})|} = \frac{1}{2\log(b)}$. 
\end{proof}

\section{Computational tool in the case of a non-free action}\label{sec-tool-non-basic}

We will now repeat the discussion from Section \ref{sec-tool-basic}, and add some extra structure in order to deal with a non-free action. For the proofs see  \cite[Section 2]{grabowski-on-turing-dynamical-systems-and-the-atiyah-problem}.

Let $\Ga\actson X$ be as in Section \ref{sec-tool-basic}, with the exception that it is not necessarily a free action. Let $T\in \Ga\ltimes L^\infty(X)$ be defined as $T := \sum_{i=1}^n a_i\ga_i\chi_i$ (with the notation from Section \ref{sec-tool-basic}).

Consider the oriented graph $\cal G_\Ga$ whose set of vertices is $X$, and with edges labelled by the elements of the set  $\{\ga_1,\ldots,\ga_n\}$, defined as follows. There is an edge with label $\ga_i$ from $x_1$ to $x_2$ if $x_1\in X_i$ and $\ga_i.x_1=x_2$. Let $\cal G_\Ga(x)$ be the connected component of $x$.  We say $\cal G_\Ga(x)$ is \textit{simply-connected} if multiplying edge-labels along any closed loop gives the trivial element of $\Ga$ (if a loop traverses an edge in the direction opposite to the orientation of the edge, we invert the label). 

Let $\cal G(x)$ be the graph which arises from $\cal G_\Ga(x)$ by changing the  label $\ga_i$ on the edge between $x_1$ and $x_2$ as above to the sum
$$
	\sum_{\mathclap{\substack{j\colon \ga_j=\ga_i\\x_1\in X_j}}} a_j.
$$
Finally let  $T(x) \colon l^2(\cal G(x)) \to l^2(\cal G(x))$ be the adjacency operator on $\cal G(x)$. The next proposition follows from \cite[Proposition 2.10]{grabowski-on-turing-dynamical-systems-and-the-atiyah-problem}.

\begin{proposition}\label{prop-tool-non-free}
Let us assume that the set of $x$ such that  $\cal G_\Ga(x)$ is finite and simply-connected  is of full measure. Then $\dimvn\ker T$ is equal to 
$$
\int_X \frac{\dim\ker{T(x)}}{|\cal G(x)|}\,d\mu(x).
$$\qed
\end{proposition}

\section{Irrational $l^2$-Betti numbers arising from $\ZZ_p\wr \ZZ$}\label{sec-the-operator}

For the rest of the article let $X$ be the compact abelian group $\ZZ_p^\Z\times \ZZ_2^3$, and $\Ga = \ZZ \times \aut(\ZZ_2^3)$. The action $\Ga {\actson} X$ is the natural one, i.e.~$\aut(\ZZ_2^3)$ acts on $\ZZ_2^3$ and $\ZZ$ acts on $\ZZ_p^\Z$ by shifting the coordinates.

Note that  $\Ga\ltimes A $ is isomorphic to $(\ZZ_p \wr \ZZ) \times (\aut(\ZZ_2^3)\ltimes \ZZ_2^3)$. We will shortly define $T \in \Q[\Ga\ltimes A]$ such that 
$$
\dimvn\ker T = \frac{4p^3+3p^2+2p-1}{8p^3} + \frac1{8p^2(p-1)} \sum_{k=1}^\infty (\frac{p-1}{p})^{k+2^{k-1}},
$$

The additional factor $1344$ in Theorem \ref{thm-intro-zero} comes from the fact that $\ZZ_p\wr\ZZ$ is a  subgroup in $\Ga\ltimes A$ of index $1344$ (see e.g.~\cite[Lemma 6.2]{grabowski-on-turing-dynamical-systems-and-the-atiyah-problem} for more explanation). Furthermore, for $k\neq 0$ the kernels of $T$ and $kT$ are the same, so we will also obtain a matrix over $\Z[\Ga\ltimes A]$ whose kernel dimension is as above.

Let $A$, $B$, $C$, $D$, $F$, $I$, $U_1$, $U_2$ ($U$ stands for \textit{unimportant}, $F$ for \textit{final} and $I$ for \textit{initial}) denote the elements of $\Zmod{2}^3$. The only assumption on this labelling is  that the first $6$ symbols correspond to non-zero elements of $\Zmod{2}^3$.

For every pair $(x,y)$ of different elements from the set $\{A, B, C,D, F, I\}$ we fix an automorphism denoted  by $(x\to y)\in \aut(\Zmod{2}^3)$ which sends
$x$ to $y$, in such a way that  
\begin{equation}\label{eq_loops_inv}
(x\to y) = (y\to x)^{-1}
\end{equation}
and
\begin{equation}\label{eq_loops_init}
(C\to D)(A\to C)=(I\to D)(A\to I).
\end{equation}
To treat the case of an arbitrary $p$, we change our notation in the following way. 
Let $\0 := \{0\} \subset \ZZ_p$ and $\1 := \{1,2,3,\ldots, p-1\}\subset \ZZ_p$. Let
$$
  [\eps_{-a}\eps_{-a+1}\ldots\eps_{-1}\underline{\eps_{0}}\eps_1\ldots\eps_b, x],
$$
where $\eps_i\in \{\0,\1\}$, denote  
$$
	\{((m_i), y)\in \Zmod{p}^\Z \times \Zmod{2}^3: m_{-a} \in \eps_{-a}, \ldots, m_b \in \eps_b, y= x\}\subset X,
$$
and let  
$$
  \chi[\eps_{-a}\eps_{-a+1}\ldots\eps_{-1}\underline{\eps_{0}}\eps_1\ldots\eps_b, x] \in L^\infty(X)
$$ 
be the corresponding indicator function.

Let $S \in \Q[\Ga\ltimes A]$ be represented by the sum of the following terms:
\begin{align}
 \label{eq_Sdef_1} ( -t(I\to D) + t^{-1}(I\to A)) &\cdot \chi[\1\underline{\0}\1,I]		\\
  (-t^2(A\to C) - 2t^{-1} ) &\cdot\chi[\1\underline{\1}\0\1,A]	\nonumber		\\
 -t^2(A\to C)		&\cdot	\chi[\0\underline{\1}\0\1,A]\nonumber	\\
 -2t^{-1} &\cdot\chi [\1\underline{\1}\0\0,A]			\nonumber	\\
 0 &\cdot \chi[\0\underline{\1}\0\0,A]				\nonumber\\
  -2t^{-1} &\cdot \chi[\1\underline{\1}\1,A]				\nonumber\\
 -(A\to B) &\cdot \chi[\0\underline{\1}\1,A]			\nonumber	\\
  -t  &\cdot \chi[\underline{\1}\1,B]		\nonumber		\\
  -(B\to A) &\cdot \chi[\underline{\1}\0,B]		\nonumber		\\
  (-t + (C\to D)) &\cdot \chi[\underline{\1}\1,C]	\nonumber			\\
 +(C\to D)  &\cdot \chi[\underline{\1}\0,C]		\nonumber		\\
 -t   &\cdot \chi[\underline{\1}\1,D]		\nonumber		\\
  -(D\to F) &\cdot \chi[\underline{\1}\0,D]		\nonumber		\\
 0	&\cdot \chi[\underline{\1}\0,F]		\nonumber		\\
 0 		&\cdot \chi_R,					\nonumber
\end{align}
where $\chi_R$ is the indicator function of the set $R$ defined to be  ``all the rest'', i.e.~the complement of the union of the sets $[\1\underline{\0}\1,I]$,
$[\1\underline{\1}\0\1,A]$, $[\0\underline{\1}\0\1,A]$, $[\1\underline{\1}\0\0,A]$, $[\0\underline{\1}\0\0,A]$,
$[\1\underline{\1}\1,A]$, $[\0\underline{\1}\1,A]$, $[\underline{\1}\1,B]$, $[\underline{\1}\0,B]$, $[\underline{\1}\1,C]$.
$[\underline{\1}\0,C]$, $[\underline{\1}\1,D]$,  $[\underline{\1}\0,D]$ and $[\underline{\1}\0,F]$. 

Finally define
\begin{equation}\label{eq_Tdef}
	T:=S + 1-\chi_R - \chi[\1\underline{\0}\1,I] - \chi[\underline{\1}\0,F]
\end{equation}

\begin{remark}\label{rem-it-is-easy}
(i) The reason we explicitly write the  terms "$0\scdot \ldots$" is that this way the right hand sides are indicator functions of disjoint sets whose union is $X$. This is helpful when checking that the connected components $\cal G_\Ga(x)$ are as claimed. To reassure the reader, without any $0$-terms it would be the same operator and the same computations would have to be performed.

(ii) The definitions of $S$ and $T$ might seem complicated at first. Let us informally describe how the author came up with them. In the process of finding a group ring element over $\ZZ_2\wr \ZZ$ (or a matrix of group ring elements) whose kernel dimension is irrational, the first step was a realization that any family of \textit{simple-to-describe} graphs can appear as the connected components $\cal G(x)$. Examples of simple-to-describe graphs are on Figures \ref{fig_sgraph_g}, \ref{fig_sgraph_h} and \ref{fig_sgraph_j}; one could formalize the notion of being simple-to-describe using regular languages. Then it was necessary to find a simple-to-describe family whose kernel dimensions behave in an irregular way. This was the most difficult step - after trial and error the family from Figure \ref{fig_sgraph_j} was found. The operator $T$ above is defined in such a way so that that family appears among the connected components $\cal G (x)$ (two other infinite families, those from Figures \ref{fig_sgraph_g} and \ref{fig_sgraph_h} also appear, but their kernel dimensions behave in a regular way, so they do not interefere with the irregularity of the family from Figure \ref{fig_sgraph_j}).

\end{remark}

We will now describe the  graphs $\cal G_\Ga(x)$  and $\cal G(x)$ for  $x\in X$. It is convenient to describe them in four families, which we do in separate subsections. 

We will show figures for the graphs, but for clarity we suppress self-loops. Note that the self-loops are given only by the terms in  \eqref{eq_Tdef}, so it is also easy to take them into account.

In all the cases it is somewhat tedious but, using Remark \ref{rem-it-is-easy},  straightforward to check that the graph $\cal G_\Ga(x)$ is as claimed for a given $x\in X$.

\subsection{Case 1: $x\in R$}\label{subsec_sgraph_u}\mbox{}

The graph $\cal G_\Ga(x)$ consists of just one vertex with no edges. Accordingly, the adjacency operator $T(x)$ is the $0$ operator. We clearly deduce the following lemma.


\begin{lemma}We have the following properties.\label{lemma_sgraph_u}
\mbox{}
\begin{enumerate}
 \item $\dim\ker T(x)  = 1$.
 \item $\cal G_\Ga(x)$ is simply-connected. 
 \item $\mu (R) =\frac{1}{8}(2+5\frac1p+ \frac{1}{p^{3}} + 2\frac{p-1}{p^3} + \frac{p-1}{p} + (\frac{p-1}{p})^2) $
\end{enumerate}
\end{lemma}
\begin{proof}
(1) and (2) are clear. As for (3), note that we can explicitly write
\begin{align*}
R &= [\underline{\0}, A] \sqcup [\underline{\0}, B] \sqcup [\underline{\0},C] \sqcup [\underline{\0}, D] \sqcup [\cdot, U_1] \sqcup [\cdot, U_2] \sqcup\\ & \sqcup [\underline{\0}, F] \sqcup [\underline{\1}\1, F] \sqcup [\underline{\1}, I] \sqcup
[\1\underline{\0}\0, I] +\sqcup [\0\underline{\0}\1, I] \sqcup [\0\underline{\0}\0,I].
\end{align*}
Since $\mu$ is the product measure, it is easy to compute the measures of the sets above. We start with  $\mu([\u\0]) = \frac{1}{p}$, $\mu([\u\1]) = \frac{p-1}{p}$, and then for example $\mu([\0\uz\1, I]) = (\frac{1}{p})^2\scdot \frac{p-1}{p}\scdot \frac{1}{8}$.
\end{proof}

\subsection{Case 2: $x\in [\0\uo\1^{k-1}\0\0,A]$}\label{subsec_sgraph_g}\mbox{} 

If we denote $x =(\0\uo\1^{k-1}\0\0,A)$, then the vertices of $\cal G_\Ga(x)$ are  
\begin{align*}
(\0\uo\1^{k-1}\0\0,A)\quad (\0\1\uo\1^{k-2}\0\0,A)\quad \ldots,\quad (\0\1^{k-1}\uo\0\0,A) \\
(\0\uo\1^{k-1}\0\0,B) \quad (\0\1\uo\1^{k-2}\0\0,B)\quad \ldots\quad (\0\1^{k-1}\uo\0\0,B).
\end{align*}

$\cal G_\Ga(x)$ is shown on Figure \ref{fig_sgraph_g_emb}. Each vertex should additionally have a self-loop with label $e$. To avoid clutter only some vertices are explicitly identified as elements of $X$.

To facilitate to the reader checking that $\cal G_\Ga(x)$ is as claimed we indicate that the corresponding terms in \eqref{eq_Sdef_1}  are
\begin{align*}
[\0\uo\1,A]& \quad[\1\uo\1,A]&\quad\ldots&\quad [\1\uo\1,A]&\quad[\1\uo\0\0, A]& \\
[\uo\1,B]& \quad[\uo\1,B]&\quad\ldots&\quad [\uo\1,B]&\quad[\uo\0, B]&.
\end{align*}

The graphs $\cal G (x)$ are shown  on Figure \ref{fig_sgraph_g}. Each vertex should additionally have a self-loop with label $1$. 

\begin{figure}[h]%
  \resizebox{0.8\textwidth}{!}{\input{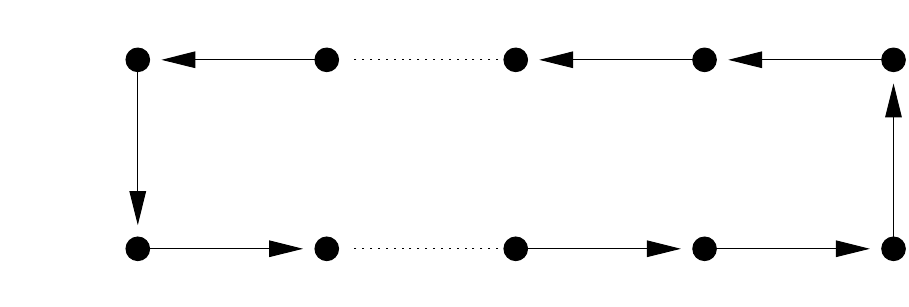_t}}
  \caption{$\cal G_\Ga(x)$ without self-loops for $x= (\0\uo\1^{k-1}\0\0,A)$.}
  \label{fig_sgraph_g_emb}
\end{figure}

\begin{figure}[h]%
  \resizebox{0.84\textwidth}{!}{\input{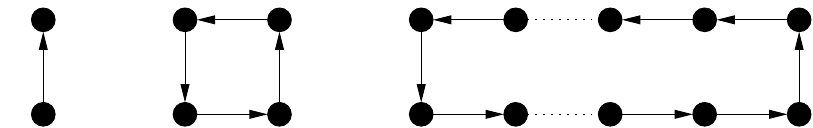_t}}
  \caption{$\cal G(x)$ without self-loops for $x= (\0\uo\0\0,A)$, $x= (\0\uo\1\0\0,A)$, and $x= (\0\uo\1^{k-1}\0\0,A)$.}
  \label{fig_sgraph_g}
\end{figure}

\begin{lemma}\label{lemma_sgraph_g}We have the following properties.
\begin{enumerate}
 \item $\dim \ker T(x) = 0$
 \item $\cal G_\Ga(x)$ is simply-connected. 
 \item \textsc{$\mu ( [\0\uo\1^{k-1}\0\0,A]) = \frac18\cdot (\frac{1}{p})^3\cdot (\frac{p-1}{p})^k$} and $|\cal G(x)| = 2k$.
\end{enumerate}
\end{lemma}
\begin{proof}
(2) follows easily from Figure \ref{fig_sgraph_g_emb} and Equation \eqref{eq_loops_inv}. (3) is a direct computation as in Lemma \ref{lemma_sgraph_u}. (1) follows from analysing Figure \ref{fig_sgraph_g}, but for completeness we give a proof in the appendix.
\end{proof}

\subsection{Case 3: $x\in [\0\0\uo\1^{l-1}\0,C]$}\label{subsec_sgraph_h}\mbox{}

If we denote $x =(\0\0\uo\1^{l-1}\0,C)$ then the vertices of $\cal G_\Ga(x)$ are 	
\begin{align*}
(\0\0\uo\1^{l-1}\0,C)&\quad  \ldots&\quad (\0\0 \1^{l-1}\uo\0,C)& \\
(\0\0\uo\1^{l-1}\0,D)& \quad \ldots& \quad (\0\0 \1^{l-1}\uo\0,D)& \\
&&(\0\0 \1^{l-1}\uo\0,F)&
\end{align*}

$\cal G_\Ga(x)$ is shown on Figure \ref{fig_sgraph_h_emb}. Each vertex except the final one should additionally have a self-loop with label $e$. To avoid clutter only some vertices are explicitly identified as elements of $X$.

To facilitate to the reader checking that $\cal G_\Ga(x)$ is as claimed we indicate that the corresponding terms in \eqref{eq_Sdef_1}  are
\begin{align*}
[\uo\1,C],\quad \ldots,\quad [\uo\1,C],\quad [\uo \0,C], \\
[\uo\1,D], \quad\ldots, \quad [\uo\1,D], \quad[\uo \0,D], \\
[\uo \0, F].
\end{align*}

The graphs $\cal G (x)$ are shown  on Figure \ref{fig_sgraph_h}.  Each vertex except the final one should additionally have a self-loop with label $1$. 

\begin{figure}[h]%
  \resizebox{0.62\textwidth}{!}{\input{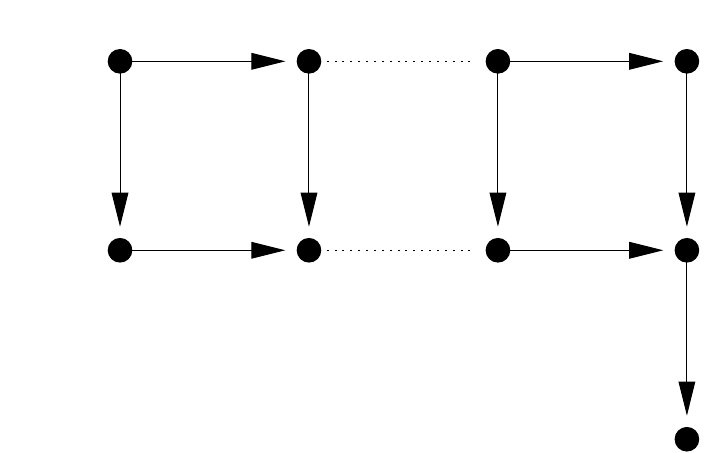_t}}
  \caption{$\cal G_\Ga(x)$ without self-loops for $x= (\0\0\,\uo\1^{l-1}\0,C)$.}
  \label{fig_sgraph_h_emb}
\end{figure}

\begin{figure}[h]%
  \resizebox{0.74\textwidth}{!}{\input{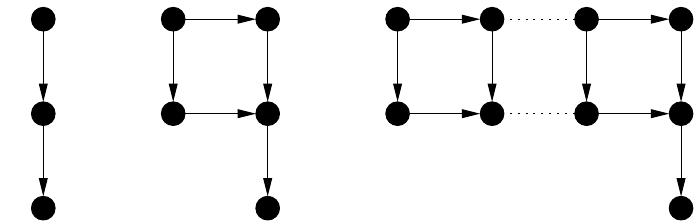_t}}
  \caption{$\cal G(x)$ without self-loops for $x= (\0\0\,\uo\0,C)$, $x= (\0\0\,\uo\1\0,C)$, and $x= (\0\0\,\uo\1^{l-1}\0,C)$}
  \label{fig_sgraph_h}
\end{figure}

\begin{lemma} The following properties are true.\label{lemma_sgraph_h}\begin{enumerate}
 \item $\dim \ker T(x) = 1$
 \item $\cal G_\Ga(x)$ is simply-connected. 
 \item \textsc{$\mu ( [\0\0\uo\1^{l-1}\0,C]) = \frac18\cdot (\frac{1}{p})^3 \cdot (\frac{p-1}{p})^l$} and $|\cal G(x)| = 2l+1$.
\end{enumerate}
\end{lemma}
\begin{proof}
(2) follows easily from Figure \ref{fig_sgraph_h_emb} and Equation \eqref{eq_loops_inv}. (3) is a direct computation as in Lemma \ref{lemma_sgraph_u}. (1) follows from analysing Figure \ref{fig_sgraph_h}, but for completeness we give a proof in the appendix.
\end{proof}

\subsection{Case 4: $x\in [\0\uo\1^{k-1}\0\1^l\0,A]$}\label{subsec_sgraph_j} \mbox{}

If we denote $x=(\0\uo\1^{k-1}\0\1^l\0,A)$ then the vertices of $\cal G_\Ga(x)$ are 
\begin{align*}
(\0\uo\1^{k-1}\0\1^l\0,A),\,\,  (\0\1\uo\1^{k-2}\0\1^l\0,A),\,\,  \ldots,\,\, (\0\1^{k-1}\uo\0\1^l\0,A), \\
(\0\uo\1^{k-1}\0\1^l\0,B),\,\,  (\0\1\uo\1^{k-2}\0\1^l\0,B), \,\, \ldots,\,\,  (\0\1^{k-1}\uo\0\1^l\0,B),\\
(\0\1^{k}\uz\1^l\0,I),\\
(\0\1^k\0\uo\1^{l-1}\0,C), \,\, \ldots,\,\,  (\0\1^k\0 \1^{l-1}\uo\0,C), \\
(\0\1^k\0\uo\1^{l-1}\0,D), \,\, \ldots,\,\,  (\0\1^k\0 \1^{l-1}\uo\0,D), \\
(\0\1^k\0 \1^{l-1}\uo\0,F).
\end{align*}

$\cal G_\Ga(x)$ is shown  on Figure  \ref{fig_sgraph_j_emb}. Each vertex except the final and the initial ones should additionally have a self-loop with label $e$. To avoid clutter only some vertices are explicitly identified as elements of $X$. Because it could be unclear which labels correspond to which vertices, the identified vertices are marked white. 

To facilitate to the reader checking that $\cal G_\Ga(x)$ is as claimed we indicate that the corresponding terms in \eqref{eq_Sdef_1}  are
\begin{align*}
[\0\uo\1,A],\quad[\1\uo\1,A],\quad\ldots,\quad[\1\uo\1,A],\quad[\1\uo\1\0\1, A], \\
[\uo\1,B],\quad[\uo\1,B],\quad\ldots,\quad[\uo\1,B],\quad[\uo\0, B],\\
[\1\underline{\0}\1, I],\\
[\uo\1,C],\quad \ldots, \quad[\uo\1,C], \quad[\uo \0,C], \\
[\uo\1,D], \quad\ldots, \quad[\uo\1,D],\quad [\uo \0,D], \\
[\uo \0, F].
\end{align*}

The graphs $\cal G (x)$ are shown  on Figure \ref{fig_sgraph_j}.  Each vertex except the final and the initial ones should additionally have a self-loop with label $1$.
 
\begin{figure}[h]%
  \resizebox{0.87\textwidth}{!}{\input{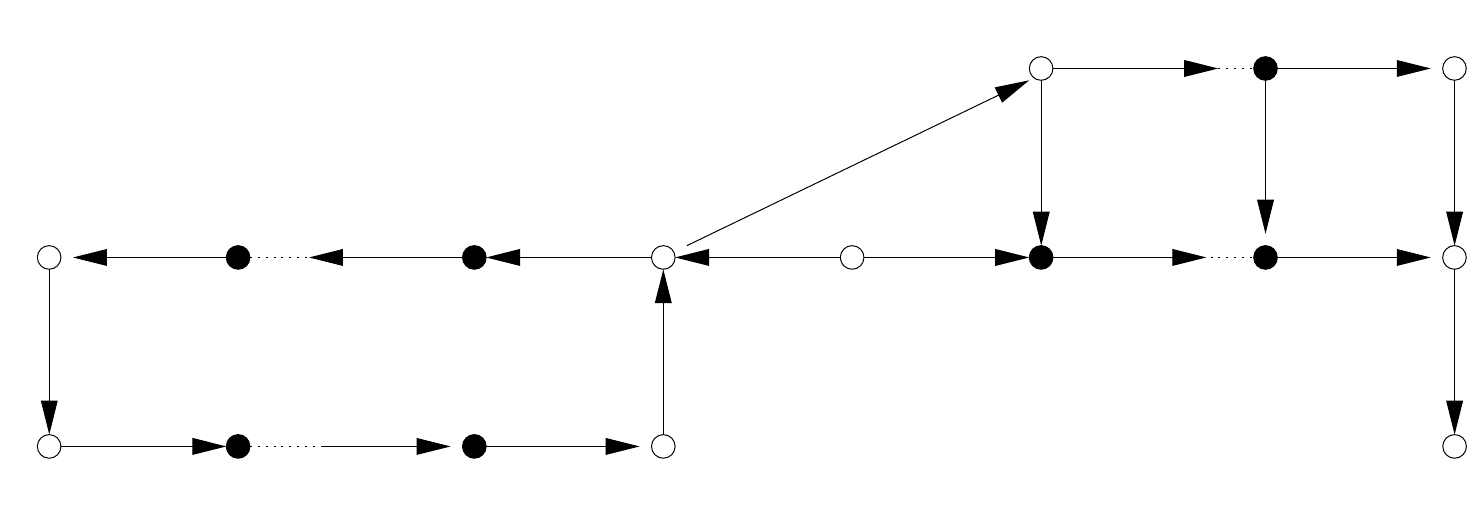_t}}
  \caption{$\cal G_\Ga(x)$ without self-loops for $x= (\0\uo\1^{k-1}\0\1^l\0,A)$.}  \label{fig_sgraph_j_emb}
\end{figure}

\begin{figure}[h]%
  \resizebox{0.95\textwidth}{!}{\input{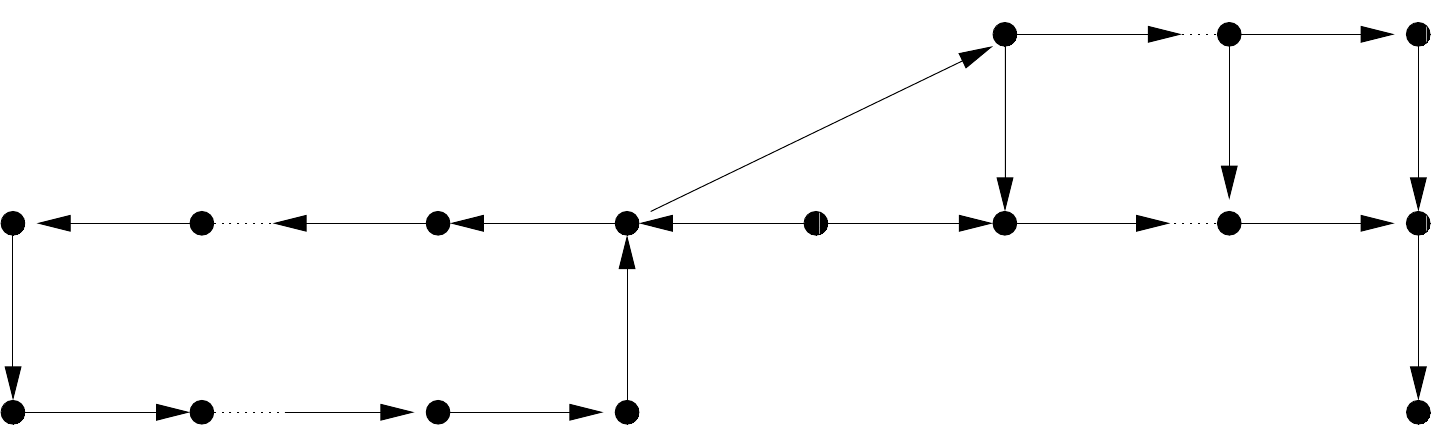_t}}
  \caption{$\cal G(x)$ without self-loops for $x= (\0\uo\1^{k-1}\0\1^l\0,A)$}
  \label{fig_sgraph_j}
\end{figure}

\begin{lemma}The following properties are true.\label{lemma_sgraph_j}
\begin{enumerate}
 \item $\dim \ker T(x) = \left\{ 
	\begin{array}{l l}
  		2 & \quad \mbox{if $l=2^{k-1}-1$ }\\
		1 & \quad \mbox{otherwise}\\ 
	\end{array} \right. $
 \item $\cal G_\Ga(x)$ is simply-connected. 
 \item \textsc{$\mu ([\0\uo\1^{k-1}\0\1^l\0,A]) = \frac18\cdot (\frac{1}{p})^3\cdot (\frac{p-1}{p})^{k+l}$} and $|\cal G(x)| = 2k+2l+2$.
\end{enumerate}
\end{lemma}
\begin{proof}
(2) follows easily from Figure \ref{fig_sgraph_j_emb} and Equations \eqref{eq_loops_inv} and \eqref{eq_loops_init}. (3) is a direct computation as in Lemma \ref{lemma_sgraph_u}. (1) follows from analysing Figure \ref{fig_sgraph_h}, but for completeness we give a proof in the appendix.
\end{proof}

\subsection{Checking that we have not missed any graphs}\mbox{}

We need to check that the graphs $\cal G(x)$ on Figures \ref{fig_sgraph_g_emb}, \ref{fig_sgraph_h_emb} and \ref{fig_sgraph_j_emb}, together with the set $R$ cover the whole space $X$. To this end we compute that the measure of the covered part is $1$, by using the formulas in Lemmas \ref{lemma_sgraph_u}(3), \ref{lemma_sgraph_g}(3), \ref{lemma_sgraph_h}(3) and \ref{lemma_sgraph_j}(3).

Let $\al :=\frac{1}{p}$, $\be := \frac{p-1}{p}$. We need to check that  
\begin{multline*}
\frac{1}{8}(2+5\al+ \al^3 + 2\be\al^2 + \be + \be^2) \,\,+\,\,
\sum_{k=1}^\infty 2k\scdot \frac18\scdot \al^3\scdot \be^k \,\,+\,\,\\
+\sum_{l=1}^\infty (2l+1)\scdot \frac18\scdot \al^3 \scdot \be^l \,\,+\,\,
\sum_{k,l=1}^\infty (2k+2l+2) \scdot \frac18\scdot \al^3\scdot \be^{k+l} \,=\,1.
\end{multline*}

This is a tedious but elementary exercise in using the formula 
$$
	\sum_{n=1}^\infty (n+C) x^n = \frac{x}{(1-x)^2} + \frac{Cx}{1-x},
$$
valid for $0\le x \le 1$.

\subsection{The end game}\label{subsec-the-end-game}\mbox{}

We are now in a position to use Proposition \ref{prop-tool-non-free}. The following corollary, together with the discussion at the beginning of Section \ref{sec-the-operator}, proves Theorem \ref{thm-intro-zero}.

\begin{cory}\label{cory_dimT} We have
$$
\dimvn\ker T =
\frac{4p^3+3p^2+2p-1}{8p^3} + \frac1{8p^3} \sum_{k=1}^\infty (\frac{p-1}{p})^{k+2^{k}},
$$ which is a transcendental number.
\end{cory}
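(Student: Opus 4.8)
The plan is to feed the data assembled in Section~\ref{sec_application} into the computational tool of Theorem~\ref{thm_computational_tool}, and then deduce transcendence from Tanaka's theorem. Throughout write $\al=\tfrac1p$ and $\be=\tfrac{p-1}{p}$, so $1-\be=\al$. First I would check the three hypotheses of Theorem~\ref{thm_computational_tool}: hypothesis (1), that $(S\text{-Graphs}_\text{fin},\mu)$ is a probability space, is exactly Corollary~\ref{cory_is_probabilistic}; hypotheses (2) and (3), that every element of $S\text{-Graphs}_\text{fin}$ is simply-connected and has no non-trivial automorphisms, follow from parts (2) and (3) of Lemmas~\ref{lemma_sgraph_u}, \ref{lemma_sgraph_g}, \ref{lemma_sgraph_j} and of the analogous lemma for $\mathbf h(l)$, using Corollary~\ref{cory_is_probabilistic} to know that $\mathbf u$, the $\mathbf g(k)$, the $\mathbf h(l)$ and the $\mathbf j(k,l)$ are the only graphs of positive measure. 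Theorem~\ref{thm_computational_tool} then yields
$$
\dim_{vN}\ker T=\sum_{\mathbf g\in S\text{-Graphs}_\text{fin}}\frac{\mu(\mathbf g)}{|V(\mathbf g)|}\,\dim\ker T^{\mathbf g},
$$
in which only the four named families contribute, and since $\dim\ker T^{\mathbf g(k)}=0$ the graphs $\mathbf g(k)$ drop out entirely.

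Next I would evaluate the remaining three families, exploiting the key feature of Corollary~\ref{cory_is_probabilistic} that each $\mu(\mathbf g)$ is precisely $|V(\mathbf g)|$ times an elementary cylinder measure, so that $\mu(\mathbf g)/|V(\mathbf g)|$ no longer depends on the ``length'' of the graph within each family. Thus $\mathbf u$ contributes $\mu(\mathbf u)$; the graphs $\mathbf h(l)$ contribute $\sum_{l\ge 1}\tfrac18\al^3\be^{l}=\tfrac{\al^3}{8}\cdot\tfrac{\be}{1-\be}=\tfrac{\al^2\be}{8}$; and, since by Lemma~\ref{lemma_sgraph_j} $\dim\ker T^{\mathbf j(k,l)}$ equals $1$ in general and $2$ precisely when $l=2^{k-1}-1$, the graphs $\mathbf j(k,l)$ contribute
$$
\frac{\al^3}{8}\sum_{k,l\ge 1}\be^{k+l}\;+\;\frac{\al^3}{8}\sum_{k\,:\,2^{k-1}-1\ge 1}\be^{\,k+2^{k-1}-1}
\;=\;\frac{\al^3}{8}\Bigl(\sum_{k\ge 1}\be^{k}\Bigr)^{\!2}\;+\;\frac{\al^3}{8}\sum_{k\,:\,2^{k-1}-1\ge 1}\be^{\,k+2^{k-1}-1},
$$
the first summand being $\tfrac{\al^3}{8}\cdot\tfrac{\be^2}{\al^2}=\tfrac{\al\be^2}{8}$. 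Collecting the rational contributions $\mu(\mathbf u)$, $\tfrac{\al^2\be}{8}$ and $\tfrac{\al\be^2}{8}$ and simplifying the resulting rational function of $p$ gives $\tfrac{4p^3+3p^2+2p-1}{8p^3}$, while the remaining lacunary sum, after reindexing and absorbing the constant $\tfrac{\al^3}{8\be}=\tfrac{1}{8p^2(p-1)}$, becomes $\tfrac{1}{8p^2(p-1)}\sum_{k\ge 1}\bigl(\tfrac{p-1}{p}\bigr)^{k+2^{k-1}}$. This is the displayed formula.

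It then remains to prove transcendence. Set $F(z)=\sum_{k\ge 1}z^{n_k}$ with $n_k=k+2^{k-1}$. The $n_k$ form a strictly increasing sequence of positive integers with $\lim_{k\to\infty}n_{k+1}/n_k=2$ and $\inf_k n_{k+1}/n_k>1$, so $F$ is a power series with Hadamard gaps; in particular its circle of convergence $|z|=1$ is a natural boundary, so $F$ is not an algebraic function. Since $\be=\tfrac{p-1}{p}$ is a rational number with $0<\be<1$, Tanaka's theorem on transcendence of values of series with Hadamard gaps \cite{Tanaka:Transcendence_of_the_values_of_certain_series_with_Hadamard's_gaps} shows that $F(\be)$ is transcendental. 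As $\dim_{vN}\ker T$ is the sum of a rational number and the nonzero rational multiple $\tfrac{1}{8p^2(p-1)}$ of $F(\be)$, it is transcendental as well.

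I expect the only genuine work at this stage to be bookkeeping, not ideas: separating the $\mathbf j(k,l)$ sum cleanly into its rational double-geometric part and its lacunary part with the correct index shift and prefactor, and checking that the combined rational contribution collapses exactly to $\tfrac{4p^3+3p^2+2p-1}{8p^3}$; on the transcendence side, the one point to be careful about is that the particular series $\sum_k(\tfrac{p-1}{p})^{k+2^{k-1}}$ indeed satisfies the hypotheses under which Tanaka's theorem applies. The conceptually difficult part — producing the operator $T$, enumerating $S\text{-Graphs}_\text{fin}$, and computing all the measures $\mu(\mathbf g)$ and the kernel dimensions $\dim\ker T^{\mathbf g}$ — has already been carried out in Sections~\ref{sec_graph_prelim}--\ref{sec_application}.
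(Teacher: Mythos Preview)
Your proposal is correct and follows the paper's own proof essentially line for line: verify the hypotheses of Theorem~\ref{thm_computational_tool} via Corollary~\ref{cory_is_probabilistic} and Lemmas~\ref{lemma_sgraph_u}--\ref{lemma_sgraph_j}, sum the contributions of the four families of $S$-graphs (with the $\mathbf g(k)$ dropping out), and invoke Tanaka's theorem for the transcendence of the lacunary series. One small bookkeeping point: you correctly impose the constraint $2^{k-1}-1\ge 1$, i.e.\ $k\ge 2$, on the extra $\mathbf j(k,l)$ contribution, but no ``reindexing'' converts $\sum_{k\ge 2}\be^{k+2^{k-1}}$ into $\sum_{k\ge 1}\be^{k+2^{k-1}}$; the paper's own computation simply starts this sum at $k=1$ without comment, so the discrepancy (a rational term $\al^3\be/8$) lives in the displayed formula itself and is immaterial to the transcendence conclusion.
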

\begin{proof}

Let $T_0$ be the $0$ operator $\C\to \C$, let $T_1(k)\colon \C^{2k}\to\C^{2k}$ be the adjacency operator on the graph from Figure \ref{fig_sgraph_g}, let $T_2(l)\colon \C^{2l+1}\to \C^{2l+1}$ be the adjacency operator on the graph from Figure \ref{fig_sgraph_h}, and finally let $T_3(k,l)\colon \C^{2k+2l+2} \to \C^{2k+2l+2}$ be the adjacency operator on the graph from Figure \ref{fig_sgraph_j}.

By Proposition \ref{prop-tool-non-free} and the computations in the previous subsections, the left-hand side is equal to the sum of the following terms
\begin{gather*}
 	\frac{1}{8}(2+5\al+ \al^3 + 2\be\al^2 + \be + \be^2) \cdot \dim\ker T_0, \\
 	\sum_{k=1}^\infty \frac18\cdot \al^3\cdot \be^k\cdot \dim\ker T_1(k), \\
 	\sum_{l=1}^\infty \frac18\cdot \al^3 \be^l \dim\ker T_2(l),  \\
 	\sum_{k,l=1}^\infty \frac18\cdot \al^3 \be^{k+l} \dim\ker T_3(k,l).
\end{gather*}
Substituting the values for the kernel dimensions we get
\begin{multline*}
	\frac{1}{8}(2+5\al+ \al^3 + 2\be\al^2 + \be + \be^2) \,+\,0 \,+\, \sum_{l=1}^\infty \frac18\cdot \al^3\be^l \,+\,\\
	 \,+\, \sum_{k,l=1}^\infty \frac18\cdot \al^3 \be^{k+l}  \,+\, \sum_{k=2}^\infty \frac18\cdot \al^3
\be^{k+2^{k-1}-1}.
\end{multline*}
Noting that  $\sum_{k,l=1}^\infty \be^{k+l} = \sum_k \be^k\sum_l\be^l = (\frac{\be}{\al})^2$, after a short calculation  we obtain
$$
	\frac{1}{8}(2+5\al+ \al^3 + 2\be\al^2 + \be + \be^2) + 
        \frac18\al^2\be + \frac18 \al\be^2 +
	\frac18 \al^3\sum_{k=1}^\infty \be^{k+2^{k}},
$$
which is equal to the right-hand side.

Transcendence of  $\sum_{k=1}^\infty (\frac{p-1}{p})^{k+2^{k-1}}$ follows from \cite[Theorem 1]{Tanaka:Transcendence_of_the_values_of_certain_series_with_Hadamard's_gaps}.  Although similar series have
been studied already by Mahler \cite{Mahler:Arithmetische_Eigenschaften_der_Losungen_einer_Klasse_von_Funktionalgleichungen}, the article  \cite{Tanaka:Transcendence_of_the_values_of_certain_series_with_Hadamard's_gaps} seems to be the first
work which implies the transcendence of $\sum_{k=1}^\infty (\frac{p-1}{p})^{k+2^{k-1}}$.
\end{proof}

\appendix
\section{Linear algebra computations}

The following obvious lemma will be used many times. 
\begin{lemma}[``flow lemma at a vertex $v$'']\label{lemma_flow}
Let $T$ be the adjacency operator on an edge-labelled directed graph, let $v$ be a vertex, let $w_1,\ldots, w_n$ be all the vertices for which there are directed edges towards $v$, and let the corresponding edge labels be $a_1,\ldots, a_n\ \in \C$. Let $f\in \ker T$. Then 
$$
	\sum a_i f(w_i) = 0.
$$	
\end{lemma}\qed

\subsection{$x\in [\0\uo\1^{k-1}\0\0,A]$}\label{sec_graph_g}\mbox{} 

We give the vertices of $\cal G(x)$ shorthand names as in Figure \ref{fig_graph_g}. 
\begin{figure}[h]%
  \resizebox{0.72	\textwidth}{!}{\input{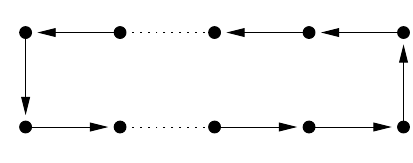_t}}
  \caption{}  
  \label{fig_graph_g}
\end{figure}

\begin{lemma}\label{lem-appendix-g} We have
  $\dim\ker T(x) = 0$.
\end{lemma}
\begin{proof}
A direct check confirms the claim when $k=1$.  For $k>1$ let $f\in \ker T(x)$. From the flow lemma at $A_1$ we see that
$f(A_1) = f(B_k)$, and inductively $f(A_1) = f(B_1) = f(A_k)$.

On the other hand from the flow lemma at $A_2$ we see $f(A_2) = 2\scdot f(A_1)$, and inductively  $f(A_k) = 2^{k-1}\scdot f(A_1)$. Altogether we get
$$
f(A_1) = 2^{k-1}\scdot f(A_1),
$$ 
which is a contradiction. 

\end{proof}

\subsection{$x\in [\0\0\uo\1^{l-1}\0,C]$}\label{sec_graph_h}\mbox{} 

We give the vertices of $\cal G(x)$ shorthand names as in Figure \ref{fig_graph_h}. 
\begin{figure}[h]%
  \resizebox{0.56\textwidth}{!}{\input{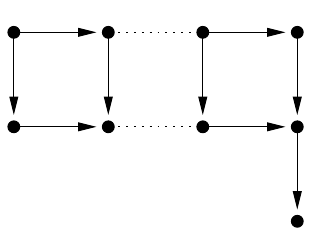_t}}
    \caption{}
  \label{fig_graph_h}
\end{figure}

\begin{lemma}
 We have $\dim \ker T(x) = 1$.
\end{lemma}
\begin{proof}

The matrix of $T(x)$ in the basis $C_1,\ldots, C_l, D_1,\ldots, D_l, F$ is upper-triangular. The diagonal entries corresponding to $C_i$ and $D_i$ are equal to $1$, and the diagonal entry corresponding to $F$ is $0$. This shows the lemma.
\end{proof}

\subsection{$x\in [\0\uo\1^{k-1}\0\1^l\0,A]$}\label{sec_graph_j}\mbox{}

We give the vertices of $\cal G(x)$ shorthand names as in Figure \ref{fig_graph_j}. 

\begin{figure}[h]%
  \resizebox{\textwidth}{!}{\input{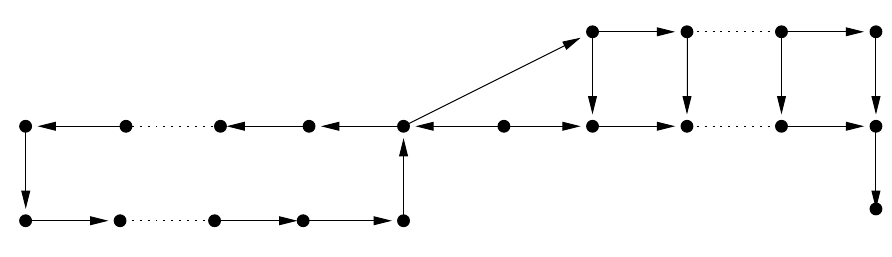_t}}
  \caption{}
  \label{fig_graph_j}
\end{figure}

\begin{lemma}  If $l= 2^{k-1}-1$ then $\dim \ker T(x)= 2$. Otherwise $\dim \ker T(x)=1$.
\end{lemma}

\begin{proof}
We will focus on the case $k>1$. The arguments in the case $k=1$ are very similar and are left to the reader. 

First, assume  $l=2^{k-1}-1$. The first generator of $\ker T(x)$ is the indicator function of the vertex $F$. The coefficients of 
another generator of $\ker T(x)$ are depicted on Figure \ref{fig_graph_j_ker}.

\begin{figure}[h]%
  \resizebox{\textwidth}{!}{\input{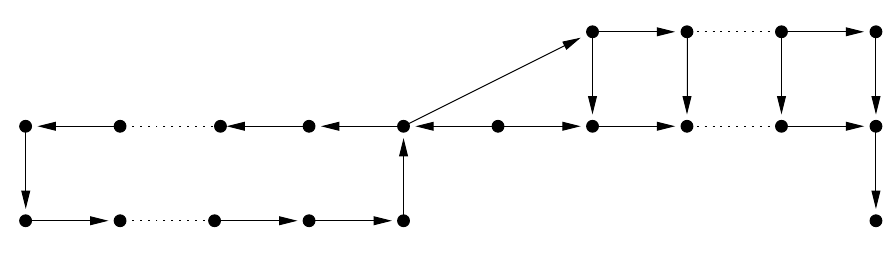_t}}
  \caption{Coefficients of the second generator of $\ker T(x)$ when $l= 2^{k-1}-1$}
  \label{fig_graph_j_ker}
\end{figure}

To see that these two vectors generate all of $\ker T(x)$ let us prove the following.

\begin{lemma*}
 Let $f\in \ker T(x)$ be such that $f(F) = f(A_1)=0$. Then $f=0$.
\end{lemma*}

\begin{proof}
From the flow lemma at $A_2$ we see that $f(A_1)=0$ implies $f(A_2)=0$. Similarly we show $f(A_i) = f(B_i)= 0$ for all $i$. Now the flow lemma at $A_1$ together with $f(A_1) = f(B_k)=0$ implies $f(I)=0$, and the flow lemma at $C_1$ and $f(A_1)=0$ imply $f(C_1)=0$. The flow lemma at $D_1$ together with $f(I) = f(C_1)=0$ implies $f(D_1)=0$.

Now note that the flow lemma at $C_{i+1}$ and $f(C_i) =0$ imply $f(C_{i+1})=0$. Thus we get $f(C_i)= 0$ for all $i$.

Finally the flow lemma at $D_{i+1}$ and $f(D_i) = f(C_{i+1}) =0$ imply $f(D_{i+1}) =0$, and so we also get $f(D_i) =0$ for all $i$. Since $f(F)=0$ by assumption, the claim follows.
\end{proof}

Note that the indicator function of the vertex $F$ is in $\ker T(x)$ for arbitrary $(k,l)$. Thus to finish the proof it is enough to show that if $f\in \ker T$ is such that $f(A_1)= 1$ then $l=2^{k-1}-1$.

So assume $f(A_1)=1$. From the flow lemma at $A_2$ we get $f(A_2)= 2$. Similarly $f(A_i) = 2^{i-1}$ for all $i$, and in particular $f(A_k) =2^{k-1}$. 

Now from the flow lemma at $B_1$ we have also $f(B_1)=2^{k-1}$ and by induction $f(B_k)=2^{k-1}$.

Since $f(A_1) =1$ and $f(B_k) =2^{k-1}$, the flow lemma at $A_1$ implies $f(I) = 2^{k-1}$.  The flow lemma at $C_1$ together with $f(A_1) =1$ implies  $f(C_1)=1$, and by induction $f(C_i)=1$ for all $i's$. Thus by the flow lemma at $D_1$ we get $f(D_1) =2^{k-1}-2$ and inductively $f(D_i) = 2^{k-1}-i-1$.

This means that $f(D_l) =0 $ only if $0= 2^{k-1} -l -1$. Since the flow lemma at $F$ implies $f(D_l)= 0$, this ends the proof.  
\end{proof}



\bibliographystyle{alpha}
\bibliography{bibliografia}

\end{document}